\newtheorem{theorem}{Theorem}
\newtheorem{corollary}[theorem]{Corollary}
\theoremstyle{definition}
\newtheorem{definition}[theorem]{Definition}
\theoremstyle{remark}
\newtheorem{example}[theorem]{Example}
\newcommand{\real}{{\mathbb R}}
\newcommand{\zed}{{\mathbb Z}}
\newcommand{\expect}{{\mathbb E}}
\newcommand{\prob}{{\mathbb P}}
\newcommand{\One}{{\mathbf 1}}
\newcommand{\FLAG}[2]{\left[ \genfrac{}{}{0pt}{}{#1}{#2} \right]}   
\newcommand{\gmf}[1]{\mathcal{M}^{\gamma}_{#1}}    
\newcommand{\dmufloor}[1]{\,\lfloor d\mu_{#1}\rfloor}	
\newcommand{\dmuceil}[1]{\,\lceil d\mu_{#1}\rceil}	
\begin{document}

\title{Hadwiger Integration of Random Fields}
\author{Matthew L. Wright}
\date{\today}
\address{Institute for Mathematics and its Applications, University of Minnesota, Minneapolis, Minnesota, USA}
\email{mlwright@ima.umn.edu}

\begin{abstract}
	Hadwiger integrals employ the intrinsic volumes as measures for integration of real-valued functions.
	We provide a formula for the expected values of Hadwiger integrals of Gaussian-related random fields.
	The expected Hadwiger integrals of random fields are both theoretically interesting and potentially useful in applications such as sensor networks, image processing, and cell dynamics.
	Furthermore, combining the expected integrals with a functional version of Hadwiger's theorem, we obtain expected values of more general valuations on Gaussian-related random fields.
\end{abstract}

\keywords{Hadwiger integral, intrinsic volume, random field, Gaussian kinematic formula}

\maketitle

\section{Introduction}

The intrinsic volumes are valuations on sets that provide various notions of the \emph{size} of a set, thus generalizing both Lebesgue volume and Euler characteristic.
Because the intrinsic volumes are additive, they can be used as ``measures'' for integration of functions defined on sets \cite{BaGrWr}.
The resulting \emph{Hadwiger integrals}, which generalize both the Lebesgue integral and the Euler integral, provide various notions of the size of a function.
The goal of this paper is to combine Hadwiger integrals with Gaussian-related random fields, obtaining expected-value results that will be useful in various applications.

We briefly describe the potential utility of the Hadwiger integrals.
While the Euler integral is useful for counting the number of objects detected by a sensor network \cite{BG:pnas, BG:siam}, the higher-dimensional Hadwiger integrals return the aggregate perimeter, surface area, etc.\ of the detected objects \cite{BaGrWr}.
Current work (in progress by the author) explores uses of these integrals in image processing.
For example, local Euler/Hadwiger integrals seem to be useful in distinguishing between textures within an image.
While the Euler integral can compare textures in a scale-invariant way, the Hadwiger integrals can help distinguish scale-dependent features in an image.
Furthermore, in cell structures that evolve by a process of mean curvature flow, the intrinsic volumes appear in the three-dimensional von Neumann--Mullins relation, which gives the rate of change of cell volume \cite{MS}.
This suggests that the Hadwiger integrals may be useful for understanding change in functions defined on a cell structure (for example, a function that gives the temperature at each point in a cell).

In applications such as these, one often encounters noise and other uncertainties which can be modeled by random fields.
The data obtained from a sensor network, for example, only approximates a function whose true value cannot be measured exactly at every point in its domain.
The observed function might contain noise or uncertainties that can be modeled by a Gaussian process.
Thus, an understanding of Hadwiger integrals of random fields helps explain the contribution of noise to situations in which one desires to compute Hadwiger integrals.

Bobrowski and Borman gave the expected Euler integral of Gaussian-related random fields \cite{BoBo}.
We generalize their results, computing the expected Hadwiger integrals of Gaussian-related random fields.
We first provide some background material, both on Hadwiger integration and on random fields.
Our main result is Theorem \ref{thm:ExpHadInt}, which gives a formula for the expected Hadwiger integrals in terms of the intrinsic volumes and Gaussian Minkowski functionals.
We then provide two examples in which the Gaussian Minkowski functionals, and thus the expected Hadwiger integrals, are computed explicitly.
Lastly, we connect this work to Hadwiger's theorem, which allows us to obtain the expected values of more general valuations on Gaussian-related random fields.

\section{Hadwiger Integration}

The intrinsic volumes\footnote{The intrinsic volumes are also known as \emph{Lipschitz-Killing curvatures} and, with different normalization, \emph{Minkowski functionals} and \emph{quermassintegrals}. Klain and Rota give a combinatorial approach to the intrinsic volumes \cite{KlRo}, while Adler and Taylor provide a perspective from integral geometry \cite{AdTa2007}.}
are a class of $n+1$ valuations defined on ``tame'' subsets of Euclidean space $\real^n$.
We denote the intrinsic volumes as $\mu_0, \ldots, \mu_n$, with $\mu_0$ the Euler characteristic and $\mu_n$ Lebesgue measure on $\real^n$.
Intuitively, $\mu_k$ gives a notion of the $k$-dimensional size of a set; for example, $\mu_1$ gives the length, or more properly \emph{mean width}, of a set, and $\mu_{n-1}$ is proportional the surface area for $n$-dimensional sets.
The intrinsic volumes appear in the Steiner Formula, which gives the $n$-dimensional volume of a ``tube'' of radius $\rho$ around a closed convex $n$-dimensional set $A$:
\begin{equation}\label{eq:SteinerFormula}
	\mu_n(A + \rho B_n) = \sum_{i=0}^n \omega_{n-i}\rho^{n-i}\mu_i(A).
\end{equation}
In equation \eqref{eq:SteinerFormula}, $B_n$ is the $n$-dimensional unit ball in $\real^n$, $\omega_n$ denotes its volume, and the sum on the left is the Minkowski sum.
The intrinsic volumes are \emph{additive}, meaning that $\mu_i(A) + \mu_i(B) = \mu_i(A \cap B) + \mu_i(A \cup B)$.
Additivity is the key property that facilitates use of the intrinsic volumes as ``measures'' for integration.
Other important properties include invariance with respect to isometries, normalization independent of the dimension of the ambient space, and homogeneity: $\mu_i(\lambda \cdot A) = \lambda^i \mu_i(A)$ for $\lambda > 0$.

To avoid fractals and other exotic sets, we restrict to ``tame'' sets in the sense of \emph{o-minimal geometry} \cite{vdD}.
Briefly, an o-minimal structure is a sequence $(S_n)_n$ of Boolean algebras of subsets of $\real^n$ that satisfy a few axioms: closure under products and projections, $S_n$ contains the diagonal elements $\{ (x_1, \ldots, x_n) \mid x_i=x_j \}$, and $S_1$ consists of finite unions of points and intervals.
Examples of o-minimal structures include the semilinear sets and semialgebraic sets.
Elements of an o-minimal structure are considered \emph{tame}.
In particular, the intrinsic volumes are well-defined for tame sets.

For a function $f: M \to \real$ on a tame set $M \subseteq \real^n$, we write $\{ f \ge s\}$ to denote the superlevel set $\{ p \in M : f(p) \ge s \}$, and similarly for $\{ f > s \}$ and other inequalities.
For the purpose of this paper, a continuous function $f: M \to \real$ is \emph{tame} if the intrinsic volumes of $\{ f \ge s \}$ and $\{ f \le s \}$ are well-defined, except for at most finitely many $s \in \real$.

Integrals with respect to the intrinsic volumes are known as \emph{Hadwiger integrals}, which can be thought of as valuations on functions.
The Hadwiger integrals of continuous functions appear in dual pairs, called the \emph{lower} and \emph{upper} Hadwiger integrals \cite{BaGrWr}.
We denote the lower Hadwiger integral of a function $f : M \to \real$ with respect to $\mu_i$ as $\int f \dmufloor{i}$, and the upper Hadwiger integral as $\int f \dmuceil{i}$.

\begin{definition}\label{HadIntDef}
	The lower and upper Hadwiger integrals of a tame function $f : M \to \real$ are defined as follows:
	\begin{align}
		\int_M f \dmufloor{i} &= \int_0^\infty \left( \mu_i\{f \ge s\} - \mu_i\{f < -s\} \right) ds \\
		\int_M f \dmuceil{i} &= \int_0^\infty \left( \mu_i\{f > s\} - \mu_i\{f \le -s\} \right) ds
	\end{align}
	for any $i \in \{ 0, 1, \ldots, n\}$. 
\end{definition}

The Hadwiger integrals provide various notions of the \emph{size} of a function.
Just as we can interpret $\mu_k$ as indicating the $k$-dimensional size of sets, an integral with respect to $\mu_k$ gives a notion of the $k$-dimensional size of real-valued functions defined on sets.
Intuitively, an integral with respect to $\mu_k$ returns a weighted sum of the $k$-dimensional sizes of all superlevel sets of a function.
For example, if a tame function $f : \real^n \to \zed_{\ge 0}$ has compact support and \emph{finite} image, then
\begin{equation}\label{eq:LowUp}
	\int_{\real^n} f \dmufloor{i} = \int_{\real^n} f \dmuceil{i} = \sum_{s = 1}^{\mathrm{max}(f)} \mu_i\{f \ge s \}.
\end{equation}
While the lower and upper Hadwiger integrals agree (as in \eqref{eq:LowUp}) on functions with finite image, for continuous functions the two integrals are generally not equal.
With suitable assumptions about continuity, any Euclidean-invariant valuation on real-valued functions is a linear combination of the Hadwiger integrals \cite{BaGrWr}.

\section{Random Fields}

A \emph{random field} is a stochastic process, defined over a topological space, taking values in $\real^k$.
Intuitively, a random field $f$ can be thought of as a function on a topological space $M$ whose value at any point $p \in M$ is a random variable.
Adler and Taylor provide a formal definition \cite{AdTa2007, Taylor}.

In particular, we are interested in \emph{Gaussian} random fields.
If the finite-dimensional distributions of $(f(p_1), \ldots, f(p_j))$ are multivariate Gaussian for each $1 \le j < \infty$ and each $(p_1, \ldots, p_j) \in M^j$, then $f$ is a \emph{Gaussian random field}.
Associated to any random field $f : M \to \real^k$ are two important functions: the mean function $m(p) = \expect(f(p))$ and the covariance function
\begin{equation*}
	C(p_1,p_2) = \expect[(f(p_1)-m(p_1))(f(p_2)-m(p_2))].
\end{equation*}
Indeed, the distribution of a real-valued Gaussian random field is completely determined by its mean and covariance functions. 
A random field is \emph{isotropic} if its covariance function is invariant under isometries of $M$.

\begin{example}
	Suppose we model temperature $T$ at position $p$ and in a classroom $M \subset \real^3$.
	Every measurement involves some error, so we can model the temperature as $T(p) = u(p) + f(p)$, 
	where $u$ is the true unknown temperature and $f$ is the measurement error.
	The measurement error at any point can be modeled as a Gaussian random variable, so $f$ can be modeled as a Gaussian random field on $M$.
	(Example inspired by Chung \cite{Chung}.)
\end{example}

We also work with certain non-Gaussian random fields. 
Let $f : M \to \real^k$ be a Gaussian random field, and let $F: \real^k \to \real$ be a function.
Then we call the random field $F \circ f : M \to \real$ a \emph{Gaussian-related} random field.
Of course, the Gaussian case is recovered if $k=1$ and $F(x)=x$.

\section{Gaussian Kinematic Formula}

Adler and Taylor introduced the \emph{Gaussian kinematic formula} (GKF), which gives the expected intrinsic volume of an excursion set of a random field \cite[Theorem 15.9.4]{AdTa2007}.
We state here a version of this formula and give a brief explanation of the quantities involved.

\begin{theorem}[GKF]\label{thm:gfk}
	Let $M$ be a compact regular stratified space.
	Let $f = (f_1, \ldots, f_k) : M \to \real^k$ be a Gaussian random field, with i.i.d.\ components with zero mean and unit variance, and such that with probability one $f_j$ is a stratified Morse function.
	Let $D \subset \real^k$ be closed.
	Then, for $0 \le i \le \mathrm{dim}(M)$,
	\begin{equation}\label{eq:gkf}
		\expect\left( \mu_i\left(f^{-1}(D)\right) \right) = \sum_{j=0}^{\mathrm{dim}(M)-i} \FLAG{i+j}{j} (2\pi)^{-j/2} \mu_{i+j}(M) \gmf{j}(D)
	\end{equation}
	where $\FLAG{i+j}{j}$ is a flag coefficient and $\gmf{j}$ is a Gaussian Minkowski functional.
\end{theorem}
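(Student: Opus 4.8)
This is the Adler--Taylor Gaussian kinematic formula; I would prove it by first treating the case $i = 0$ and then reducing the general case to it. For $i = 0$ one must compute the expected Euler characteristic $\expect(\chi(f^{-1}(D)))$. The plan is to represent $\chi(f^{-1}(D))$ Morse-theoretically: restricting a generic linear height function to the stratified set $f^{-1}(D)$, its Euler characteristic equals an index-weighted signed count of critical points, with one batch of contributions from each stratum of $M$ and one from the pieces where $f$ meets $\partial D$. Each batch is a count of points $p \in M$ at which the jet $(f(p), \nabla f(p), \nabla^2 f(p))$ satisfies prescribed equalities and inequalities, so the Kac--Rice metatheorem applies and rewrites its expectation as an integral over $M$ of an explicit Gaussian density against a conditional expectation of a Jacobian determinant.

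Evaluating these Gaussian integrals is the crux. Because the components of $f$ are i.i.d.\ standard Gaussian, the conditional moments factor and the integrals against $D$ and against $\partial D$ organize into Hermite polynomials in the ``radial'' coordinate; summing the contributions along the tube structure of $D$ produces precisely the Gaussian Minkowski functionals $\gmf{j}(D)$, defined through the Gaussian Steiner formula $\gamma_k(\{x : d(x,D) \le \rho\}) = \sum_{j \ge 0} (\rho^j / j!)\, \gmf{j}(D)$ with $\gamma_k$ the standard Gaussian measure on $\real^k$. The manifold $M$ enters only through the integrals over $M$ of its curvature forms --- the Lipschitz--Killing curvatures of $M$ in the metric induced by $f$, which under the normalization of the statement are the intrinsic volumes $\mu_{i+j}(M)$ --- while the factors $(2\pi)^{-j/2}$ and the flag coefficient $\FLAG{i+j}{j}$ appear as the constants matching the curvature integrals to the Gaussian moments. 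This establishes \eqref{eq:gkf} for $i = 0$.

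For general $i$ I would return to the $i = 0$ case through a Crofton-type identity: for a tame set $A \subseteq \real^n$, $\mu_i(A)$ is a fixed multiple of the average of $\chi(A \cap E)$ over affine $(n-i)$-planes $E$. Applied to $A = f^{-1}(D)$, the slice $f^{-1}(D) \cap E = (f|_{M \cap E})^{-1}(D)$ is again a Gaussian excursion set on the submanifold $M \cap E$, so the $i = 0$ formula applies under the integral sign; integrating over the affine Grassmannian and invoking the Riemannian Crofton formula for the Lipschitz--Killing curvatures of $M$ (which turns $\int \mathcal{L}_j(M \cap E)\, d\nu(E)$ back into a multiple of $\mu_{i+j}(M)$) reassembles the right-hand side of \eqref{eq:gkf}, with the summation range $0 \le j \le \dim(M) - i$ coming from $\dim(M \cap E) = \dim(M) - i$. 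An essentially equivalent route, used by Adler and Taylor, is to adjoin independent standard Gaussian coordinates to $f$ and exploit that $\mu_i$ of a set is proportional to the Euler characteristic of an excursion set in the enlarged target.

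The main obstacle is the rigor of the Morse-theoretic and Kac--Rice step over a stratified space: checking that each $f_j$ is almost surely a stratified Morse function, that the critical-point counts have finite expectation and meet the non-degeneracy hypotheses of the metatheorem, and --- most laboriously --- tracking the contributions of every stratum of $M$ together with the $\partial D$ terms so that the Gaussian moments collapse exactly into the $\gmf{j}(D)$ with the stated constants. The regularity hypotheses on $M$ are precisely what control these boundary contributions.
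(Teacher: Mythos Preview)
The paper does not prove this theorem at all: Theorem~\ref{thm:gfk} is stated as a known result, with a citation to Adler and Taylor \cite[Theorem 15.9.4]{AdTa2007}, and is then used as a black box in the proof of Theorem~\ref{thm:ExpHadInt}. So there is no ``paper's own proof'' to compare against.

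That said, your sketch is a faithful outline of the Adler--Taylor argument the paper is citing: Morse-theoretic representation of $\chi(f^{-1}(D))$, Kac--Rice to compute the expected signed count of critical points, identification of the resulting Gaussian integrals with the $\gmf{j}(D)$ via the Gaussian tube formula, and then a Crofton/slicing (or equivalently, the auxiliary-Gaussian-coordinate trick) to pass from $i=0$ to general $i$. One small caveat: your Crofton step is phrased for tame $A \subseteq \real^n$ with Euclidean affine planes, but the $\mu_{i+j}(M)$ in \eqref{eq:gkf} are Lipschitz--Killing curvatures in the metric \eqref{eq:metric} induced by $f$, not Euclidean intrinsic volumes of an embedded set; the reduction must therefore use the Riemannian Crofton formula (or the product-field argument you mention), which you acknowledge but should make the primary route rather than an afterthought.
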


The flag coefficients of Klain and Rota are somewhat analogous to the binomial coefficients, as the notation suggests \cite{KlRo}.
They are defined
\begin{equation}
	\FLAG{n}{m} = \binom{n}{m}\frac{\omega_n}{\omega_m\omega_{n-m}},
\end{equation}
where $\omega_n$ denotes the $n$-dimensional volume of the unit ball in $\real^n$.
As the binomial coefficient $\binom{n}{m}$ counts the number of $m$-element subsets of an $n$-element set, the flag coefficient $\FLAG{n}{m}$ gives a total measure of the $m$-dimensional linear subspaces of $\real^n$.
This measure is important in the definition of the intrinsic volumes to make them \emph{intrinsic} to sets and independent of the ambient space in which a set may be embedded.

The definition of the Gaussian Minkowski functionals $\gmf{j}$ can be found in \cite{AdTa2007, Taylor}.    
They satisfy a tube formula similar to the Steiner Formula \eqref{eq:SteinerFormula}, but involving the Gaussian measure:
\begin{equation}\label{eq:gmf_tube}
	\gamma_n(A + \rho B_n) = \gamma_n(A) + \sum_{j=1}^\infty \frac{\rho^j}{j!} \gmf{j}(A),
\end{equation}
where $\gamma_n$ is the Gaussian measure on $\real^n$.
Since the $\gmf{j}$ do not depend on the dimension of the Gaussian measure space, it is not necessary to write $\mathcal{M}^{\gamma_n}_j$.

In equation \eqref{eq:gkf}, the intrinsic volumes are computed with respect to a Riemannian metric determined by the random field.
This metric is related to the covariance function $C$ of $f$, and is defined
\begin{equation}\label{eq:metric}
	g_p(X_p,Y_p) = \expect[ (X_pf) \cdot (Y_pf) ] = X_pY_qC(p,q) |_{p=q},
\end{equation}
where $X_p, Y_p \in T_pM$, the tangent manifold to $M$ at $p$ \cite[Section 12.2]{AdTa2007}.
In particular, if $f$ is isotropic, then this metric is the Euclidean metric, up to a constant multiple.

Adler and Taylor give a formal definition of regular stratified space \cite[Section 9.2.3]{AdTa2007}; examples include closed manifolds and compact manifolds with boundary.
The assumption that each $f_j$ is a stratified Morse function is also not too restrictive.
Adler and Taylor address this assumption, giving conditions under which it holds \cite[Section 11.3]{AdTa2007}.

\section{Expected Hadwiger Integral of a Random Field}

Bobrowski and Borman computed the expected Euler integral of a Gaussian-related random field \cite{BoBo}. 
We similarly compute the expected Hadwiger integral of such a field.

\begin{theorem}\label{thm:ExpHadInt}
	Let $M$ be an $n$-dimensional compact regular stratified space.
	Let $f: M \to \real^k$ be a Gaussian random field satisfying the GKF conditions.
	Let $F: \real^k \to \real$ be a piecewise $C^2$ function.
	Let $g = F \circ f$, so $g: M \to \real$ is a Gaussian-related random field.
	Then the expected lower Hadwiger integral of $g$ is:
	\begin{equation}\label{eq:ExpHadInt}
		\expect \left(\int_M g \dmufloor{i}\right) = \mu_i(M)\expect(g) + \sum_{j=1}^{n-i} \FLAG{i+j}{j} (2\pi)^{-j/2} \mu_{i+j}(M) \int_\real \gmf{j}\{ F \ge s\} \, ds,
	\end{equation}
	and similarly for the upper Hadwiger integral.
\end{theorem}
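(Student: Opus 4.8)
Proof proposal.

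The plan is to reduce the expected lower Hadwiger integral to the Gaussian kinematic formula of Theorem~\ref{thm:gfk} by interchanging expectation with the defining $s$-integral, and then to simplify algebraically.

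First I would unwind Definition~\ref{HadIntDef}. Writing $g=F\circ f$, the superlevel set is $\{g\ge s\}=f^{-1}(\{F\ge s\})$, and $\{F\ge s\}\subseteq\real^k$ is closed since $F$ is continuous. Because $F$ is piecewise $C^2$, Sard's theorem shows that for all but finitely many $s$ the sets $\{F\ge s\}$ and $\{F\le s\}$ are tame with $C^2$ boundary, hence are admissible inputs both for the GKF and for the Gaussian Minkowski functionals $\gmf{j}$. The open sublevel set in Definition~\ref{HadIntDef} is eliminated by additivity of the intrinsic volumes: $\{g<-s\}$ and $\{g\ge-s\}$ partition $M$, so $\mu_i\{g<-s\}=\mu_i(M)-\mu_i\{g\ge-s\}$, and hence
\[
  \int_M g\dmufloor{i}=\int_0^\infty\left(\mu_i\{g\ge s\}+\mu_i\{g\ge-s\}-\mu_i(M)\right)\,ds .
\]

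Next I would take $\expect$ and move it inside the $s$-integral (Tonelli/Fubini; measurability of $s\mapsto\mu_i\{g\ge s\}$ is routine for tame $g$, and integrability follows because the GKF applied at each fixed $s$ forces $\expect\mu_i\{g\ge s\}$ and $\expect\mu_i\{g\ge-s\}-\mu_i(M)$ to have Gaussian decay as $s\to\infty$). Applying Theorem~\ref{thm:gfk} to $\expect\mu_i(f^{-1}(\{F\ge s\}))$ and to $\expect\mu_i(f^{-1}(\{F\ge-s\}))$, and interchanging the finite sum over $0\le j\le n-i$ with $\int_0^\infty$, the integrand becomes
\[
  \sum_{j=0}^{n-i}\FLAG{i+j}{j}(2\pi)^{-j/2}\mu_{i+j}(M)\left(\gmf{j}\{F\ge s\}+\gmf{j}\{F\ge-s\}\right)-\mu_i(M).
\]
Since $\gmf{0}=\gamma_k$, the $j=0$ summand together with $-\mu_i(M)$ equals $\mu_i(M)\left(\gamma_k\{F\ge s\}-\gamma_k\{F<-s\}\right)$; integrating over $s\ge 0$ and applying the layer-cake formula gives $\mu_i(M)\,\expect F(\xi)$ for $\xi$ a standard Gaussian vector in $\real^k$, and because the components of $f$ are i.i.d.\ with zero mean and unit variance, $f(p)$ has the law of $\xi$ for every $p$, so $\expect F(\xi)=\expect g(p)=\expect(g)$. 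For each $j\ge 1$ the change of variables $s\mapsto-s$ turns $\int_0^\infty\gmf{j}\{F\ge-s\}\,ds$ into $\int_{-\infty}^0\gmf{j}\{F\ge s\}\,ds$, so the two pieces combine into $\int_\real\gmf{j}\{F\ge s\}\,ds$; assembling the terms yields \eqref{eq:ExpHadInt}. The upper Hadwiger integral is handled the same way, starting from $\int_M g\dmuceil{i}=\int_0^\infty(\mu_i\{g>s\}-\mu_i\{g\le-s\})\,ds$ and using $\mu_i\{g>s\}=\mu_i(M)-\mu_i\{g\le s\}$; the analogous steps lead to the companion formula in which the superlevel sets $\{F\ge s\}$ are replaced by sublevel sets $\{F\le s\}$.

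The step I expect to be the main obstacle is the regularity bookkeeping in the variable $s$: one must check that, outside a Lebesgue-null (in fact finite, by tameness) set of levels, $\{F\ge s\}$ and $\{F\le s\}$ are tame with the $C^2$-boundary regularity that makes both sides meaningful and lets the GKF be applied, and this is precisely where the piecewise-$C^2$ hypothesis on $F$ is used, via Sard's theorem together with finiteness of the pieces. One must also verify that replacing the open sublevel sets by complements of closed superlevel sets loses no level-set mass under the $s$-integral, which again follows from tameness. Once these points are settled, the remaining ingredients — the Fubini interchange, the GKF, the layer-cake identity, and the change of variables — are mechanical.
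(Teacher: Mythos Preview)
Your proposal is correct and follows essentially the same route as the paper: rewrite the Hadwiger integral via Definition~\ref{HadIntDef}, replace $\mu_i\{g<-s\}$ by $\mu_i(M)-\mu_i\{g\ge-s\}$ using additivity, exchange expectation with the $s$-integral, apply the GKF to each $\expect\mu_i\{g\ge s\}$, then split off the $j=0$ term (which becomes $\mu_i(M)\,\expect(g)$ via the layer-cake identity) from the $j\ge1$ terms (which combine into a single $\int_\real$). The paper organizes the computation with a pair of integrals $\int_0^\infty$ and $\int_{-\infty}^0$ rather than your single $\int_0^\infty$, and is terser about the regularity bookkeeping you flag, but the argument is otherwise identical.
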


\begin{proof}
	From Definition \ref{HadIntDef} of the lower Hadwiger integral,
	\begin{equation*}
		\int_M g \dmufloor{i} = \int_0^\infty \mu_i\{g \ge s\} \,ds - \int_{-\infty}^0 \left( \mu_i(M) - \mu_i\{g \ge s\} \right) ds.
	\end{equation*}
	Taking expectations,
	\begin{equation*}
		\expect \left(\int_M g \dmufloor{i}\right) = \int_0^\infty \expect\left(\mu_i\{g \ge s\}\right) ds - \int_{-\infty}^0 \left( \mu_i(M) - \expect\left(\mu_i\{g \ge s\}\right) \right) ds.
	\end{equation*}
	
	Thus, the expected Hadwiger integral can be expressed in terms of the expected intrinsic volumes of the superlevel sets $\{ g \ge s \}$.
	Since $g =  F \circ f$, it follows that $\{g \ge s\} = f^{-1}\{ F \ge s\}$.
	The Gaussian Kinematic Formula \eqref{eq:gkf} allows us to rewrite the expected intrinsic volumes of $\{ g \ge s \}$:
	\begin{equation*}
		\expect\left(\mu_i\{g \ge s\}\right) = \expect\left( \mu_i\left(f^{-1}\{ F \ge s\}\right)\right) = \sum_{j=0}^{n-i} \FLAG{i+j}{j} (2\pi)^{-j/2} \mu_{i+j}(M) \gmf{j}\{ F \ge s\}.
	\end{equation*}
	
	Combining the previous two equations, we obtain
	\begin{multline*}
		\expect \left(\int_M g \,\dmufloor{i}\right)
		= \int_0^\infty \left( \sum_{j=0}^{n-i} \FLAG{i+j}{j} (2\pi)^{-j/2} \mu_{i+j}(M) \gmf{j}\{ F \ge s\} \right) ds \\
		- \int_{-\infty}^0 \left( \mu_i(M) - \sum_{j=0}^{n-i} \FLAG{i+j}{j} (2\pi)^{-j/2} \mu_{i+j}(M) \gmf{j}\{ F \ge s\} \right) ds.
	\end{multline*}
	
	To make the above expression more manageable, we separate the $j=0$ terms out of the sum, and the expected Hadwiger integral becomes
	\begin{multline*}
		\expect \left(\int_M g \,\dmufloor{i}\right)
		= \mu_i(M)\left[ \int_0^\infty \gmf{0}\{ F \ge s\} \,ds - \int_{-\infty}^0\left( 1 - \gmf{0} \{ F \ge s\} \right) ds \right] \\
		+ \sum_{j=1}^{n-i} \int_{-\infty}^\infty \FLAG{i+j}{j} (2\pi)^{-j/2} \mu_{i+j}(M) \gmf{j}\{ F \ge s\} \,ds.
	\end{multline*}
	
	To simplify the quantity in square brackets above, let $X$ be a $k$-dimensional standard Gaussian random variable, and let $Y = F(X)$.
	Then the definition of the Gaussian Minkowski functionals implies $\gmf{0}\{ F \ge s\} = \prob\left(X \in \{ F \ge s\} \right) = \prob(Y \ge s)$.
	It follows that
	\begin{align*}
		\int_0^\infty \gmf{0}\{ F \ge s\} \,ds - \int_{-\infty}^0 \left( 1 - \gmf{0}\{ F \ge s\} \right) ds 
		&= \int_0^\infty \prob(Y \ge s)\,ds - \int_{-\infty}^0 (1-\prob(Y \ge s))\,ds \\
		&= \int_0^\infty \prob(Y \ge s)\,ds - \int_{-\infty}^0 \prob(Y < s)\,ds \\
		&= \expect(Y) = \expect(g),
	\end{align*}
	where the last equality holds because $f$ is standard normal at every point.
	Thus, we obtain equation \eqref{eq:ExpHadInt}.
\end{proof}

\section{Examples}

We give two examples to illustrate the computation of $\gmf{j}\{ F \ge s \}$ in equation \eqref{eq:ExpHadInt}.
These examples extend those given by Bobrowski and Borman for the Euler case \cite[Section 4]{BoBo} and involve computations by Adler and Taylor \cite[Section 15.10]{AdTa2007}.

\begin{example}[The Real Case]
	Suppose random field $f: M \to \real$ satisfies the conditions of Theorem \ref{thm:ExpHadInt}, $F: \real \to \real$ is piecewise $C^2$, and $g = F \circ f$.
	In this case it is possible to simplify the $\gmf{j}\{ F \ge s \}$ from equation \eqref{eq:ExpHadInt}.
	
	By continuity of $F$, $\{ F \ge s \}$ can be written as a disjoint union of closed intervals:
	\begin{equation*}
		\{ F \ge s \} = F^{-1}[s, \infty) = \bigcup_i [a_i, b_i],
	\end{equation*}
	where one of the $a_i$ may be $-\infty$ and one of the $b_i$ may be $\infty$.
	Let $\varphi(x) = (2\pi)^{-1/2} e^{-x^2/2}$ be the standard Gaussian density and let $H_m(x) = (-1)^m \varphi(x)^{-1} \frac{d^m}{dx^m} \varphi(x)$ be the $m^\textrm{th}$ Hermite polynomial.
	Bobrowski and Borman show \cite[Section 4.1]{BoBo} that for $j \ge 1$,
	\begin{equation*}
		\gmf{j}\{ F \ge s \} = \sum_i \left( (-1)^{j-1} H_{j-1}(b_i)\varphi(b_i) + H_{j-1}(a_i)\varphi(a_i) \right),
	\end{equation*}
	and furthermore that any infinite $a_i$ or $b_i$ affect only $\gmf{0}$.
	Thus, we assume all $a_i$ and $b_i$ are finite and $F^{-1}(s) = \bigcup_i \{ a_i, b_i \}$.
	Since $F'(a_i) > 0$ and $F'(b_i) < 0$, we obtain
	\begin{equation*}
		\gmf{j}\{ F \ge s\} = \sum_{x \in F^{-1}(s)} \left( \mathrm{sign}\left( F'(x) \right)\right)^{j-1} H_{j-1}(x) \varphi(x).
	\end{equation*}
	We can then express the expected lower Hadwiger integral of $g$ as:
	\begin{multline}
		\expect \left(\int_M g \dmufloor{i}\right) = \mu_i(M)\expect(g) + \\
		\sum_{j=1}^{n-i} \FLAG{i+j}{j} (2\pi)^{-j/2} \mu_{i+j}(M) \int_\real \sum_{x \in F^{-1}(s)} \left( \mathrm{sign}\left( F'(x) \right)\right)^{j-1} H_{j-1}(x) \varphi(x) \, ds.
	\end{multline}
	
	As a further special case, if $F(x)=x$, then $g$ is a Gaussian random field, $\expect(g)=0$, and we have:
	\begin{equation}
		\expect \left(\int_M g \dmufloor{i}\right) = \sum_{j=1}^{n-i} \FLAG{i+j}{j} (2\pi)^{-j/2} \mu_{i+j}(M) \int_\real H_{j-1}(s) \varphi(s) \, ds.
	\end{equation}
\end{example}

\begin{example}[The $\chi^2$ Case]
	Let $M$ be a compact $n$-dimensional manifold, $f: M \to \real^k$ a random field satisfying the conditions of Theorem \ref{thm:ExpHadInt} with $k \ge n$, and $F(x_1, \ldots, x_k) = \sum_{i=1}^k x_i^2$.
	Then $g = F \circ f$ is a called a $\chi^2$ random field.
	
	If $s \le 0$, then $\{F \ge s\} = \real^k$. 
	The tube formula \eqref{eq:gmf_tube} then implies that for $s \le 0$ and $j \ge 1$, $\gmf{j}\{F \ge s\} = 0$.
	Thus, it suffices to consider positive $s$.
	
	In the $\chi^2$ case, Adler and Taylor show \cite[Section 15.10.2]{AdTa2007} that for $j \ge 1$,
	\begin{equation*}
		\gmf{j}\{F \ge s\} = (-1)^{j-1} \left. \frac{d^{j-1}p_k(x)}{dx^{j-1}} \right|_{x=\sqrt{s}} \quad \text{where} \quad p_k(x) = \frac{x^{k-1}e^{-x^2/2}}{\Gamma\left(\frac{k}{2}\right) 2^{(k-2)/2}}.
	\end{equation*}
	Integrating, we obtain:
	\begin{align*}
		\int_0^\infty \gmf{1}\{F \ge s\} \, ds &= 2\sqrt{2}\:\frac{\Gamma\left(\frac{k+1}{2}\right)}{\Gamma\left(\frac{k}{2}\right)}, \\
		\int_0^\infty \gmf{2}\{F \ge s\} \, ds &= 2, \\
		\int_0^\infty \gmf{j}\{F \ge s\} \, ds &= 0 \quad \text{for } 3 \le j \le n.
	\end{align*}
	Additionally, $\expect(g) = k$. 
	Therefore, we can express the expected lower Hadwiger integral of $g$ as:
	\begin{equation}
		\expect \left(\int_M g \dmufloor{i}\right) = k\mu_i(M) + \FLAG{i+1}{1} \frac{2 \mu_{i+1}(M)}{\sqrt{\pi}} \cdot \frac{\Gamma\left(\frac{k+1}{2}\right)}{\Gamma\left(\frac{k}{2}\right)} + \FLAG{i+2}{2}\frac{\mu_{i+2}(M)}{\pi}.
	\end{equation}

\end{example}

\section{Connection to Hadwiger's Theorem}

We now combine Theorem \ref{thm:ExpHadInt} with Hadwiger's Theorem to obtain expected values of more general valuations of Gaussian-related random fields.
The classic Hadwiger Theorem states that all Euclidean-invariant convex-continuous valuations on subsets of $\real^n$ are linear combinations of the intrinsic volumes \cite{KlRo}.
Recent work lifted the theorem from valuations on sets to valuations on functions defined on sets, obtaining Hadwiger's Theorem for Functions \cite[Theorem 14]{BaGrWr}.

Hadwiger's Theorem for Functions requires the dual notions of lower- and upper-continuous valuations on functions.
For a rigorous treatment in integral-geometric terms, see \cite[Definition 8]{BaGrWr}.
Briefly, a valuation $v$ on functions assigns a real number to each function such that $v(0)=0$ and the following additivity condition is satisfied:
\begin{equation*}
	v(f) + v(g) = v(f \vee g) + v(f \wedge g),
\end{equation*}
for tame functions $f$ and $g$, where $\vee$ and $\wedge$ denote pointwise max and min, respectively.
If valuation $v$ is lower-continuous, then $\lim_{m \to \infty} v\left( \frac{1}{m} \lfloor mf \rfloor \right) = v(f)$, where $\lfloor \cdot \rfloor$ is the floor function, and dually for upper-continuity.
The duality of lower- and upper-continuity mirrors that present in the lower and upper Hadwiger integrals.
Hadwiger's Theorem for Functions is then \cite[Theorem 14]{BaGrWr}:

\begin{theorem}[Hadwiger's Theorem for Functions]\label{thm:HadFun}
	If $v$ is a Euclidean-invariant, lower-continuous valuation on tame functions $f: \real^n \to \real$, then
	\begin{equation}
		v(f) = \sum_{i=0}^n \int_{\real^n} c_i \circ f \dmufloor{i}
	\end{equation}
	for some continuous and monotone functions $c_i \in C(\real)$ satisfying $c_i(0)=0$.
	Similarly, an upper-continuous valuation can be written in terms of upper Hadwiger integrals. 
\end{theorem}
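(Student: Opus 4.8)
The plan is to reduce the classification to the set case, invoke the classical Hadwiger theorem, and then rebuild the function case from the valuation axiom, using lower-continuity to pass from step functions to all tame functions. First I would test $v$ against the \emph{cylinder functions} $s\One_A$ with $s\in\real$ and $A\subseteq\real^n$ tame. For fixed $s>0$ the identities $(s\One_A)\vee(s\One_B)=s\One_{A\cup B}$ and $(s\One_A)\wedge(s\One_B)=s\One_{A\cap B}$ show that $A\mapsto v(s\One_A)$ is a Euclidean-invariant additive valuation on tame sets; after checking the continuity hypothesis packaged into the definition of a lower-continuous valuation \cite[Definition 8]{BaGrWr}, the classical Hadwiger theorem \cite{KlRo} (extended from convex bodies to tame sets by additivity) gives $v(s\One_A)=\sum_{i=0}^n c_i(s)\mu_i(A)$ for real numbers $c_i(s)$. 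For $s<0$ the same identities hold with $\cap$ and $\cup$ interchanged, so $A\mapsto v(s\One_A)$ is again such a valuation; fixing the sign convention forced by Definition \ref{HadIntDef} produces functions $c_i:\real\to\real$ with $v(s\One_A)=\sum_i c_i(s)\mu_i(A)$ for every $s$, and $c_i(0)=0$ since $v(0)=0$ and $\mu_0,\dots,\mu_n$ are linearly independent.

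Next I would establish the regularity of the $c_i$, which is where I expect the real difficulty to lie. Because the $\mu_i$ are linearly independent, each $c_i(s)$ is a fixed linear functional of the assignment $A\mapsto v(s\One_A)$ — e.g.\ evaluating on balls of varying radius expresses $c_i(s)$ through finitely many values $v(s\One_B)$ — so $c_i$ inherits continuity from the continuity hypotheses on $v$. The monotonicity of the $c_i$ is the subtle point: it is precisely the condition under which the lower Hadwiger integral $\int_{\real^n} c_i\circ f\dmufloor{i}$ is itself lower-continuous in $f$, since approximating $f$ from below by $\frac{1}{m}\lfloor mf\rfloor$ approximates $c_i\circ f$ from below only when $c_i$ is non-decreasing, and otherwise the limit of the step-function integrals acquires an \emph{upper} Hadwiger integral on the portions where $c_i$ decreases. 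I would argue that, were some $c_i$ not monotone, the iterated $\vee$/$\wedge$ decomposition below would force, on a suitable tame function, a value of $v$ incompatible with its lower-continuity; hence every $c_i$ is monotone. (For an upper-continuous $v$ the roles reverse and the $c_i$ come out monotone non-increasing.)

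With the $c_i$ in hand I would prove the formula first for simple (finite-image) tame functions and then pass to the limit. A simple function $f$ with positive values $s_1<\dots<s_m$ on its nested superlevel sets $A_1\supseteq\dots\supseteq A_m$ equals the iterated maximum $(s_1\One_{A_1})\vee\dots\vee(s_m\One_{A_m})$; computing the corresponding meets (e.g.\ $(s_1\One_{A_1})\wedge(s_2\One_{A_2})=s_1\One_{A_2}$) and applying additivity of $v$ repeatedly gives $v(f)=\sum_k v(s_k\One_{A_k})-\sum_{k\ge2}v(s_{k-1}\One_{A_k})$; substituting the set formula and regrouping yields $v(f)=\sum_i\big(c_i(s_1)\mu_i(A_1)+\sum_{k\ge2}(c_i(s_k)-c_i(s_{k-1}))\mu_i(A_k)\big)$, which, by finite additivity of $\mu_i$ on disjoint tame sets, is exactly $\sum_i\int_{\real^n}c_i\circ f\dmufloor{i}$. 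A simple function taking both signs is incomparable to $0$, so $v(f)=v(f\vee 0)+v(f\wedge 0)$ reduces it to the nonnegative and nonpositive cases, matching the corresponding split of the Hadwiger integral. Finally, for an arbitrary tame $f$ the step functions $f_m=\frac{1}{m}\lfloor mf\rfloor$ are simple and increase to $f$, so lower-continuity of $v$ gives $v(f)=\lim_m v(f_m)=\lim_m\sum_i\int_{\real^n}c_i\circ f_m\dmufloor{i}$; tameness keeps the intrinsic volumes of the relevant superlevel sets uniformly bounded (o-minimality supplies the finiteness), and since each $c_i$ is continuous and \emph{monotone} the right-hand side converges to $\sum_i\int_{\real^n}c_i\circ f\dmufloor{i}$, proving the theorem. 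The upper-continuous case is the verbatim dual, with $\lceil\cdot\rceil$ in place of $\lfloor\cdot\rfloor$, strict superlevel sets $\{\cdot>s\}$ in place of $\{\cdot\ge s\}$, and upper Hadwiger integrals throughout.
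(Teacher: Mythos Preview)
The paper does not prove Theorem~\ref{thm:HadFun}; it is quoted verbatim from \cite[Theorem~14]{BaGrWr} and used as a black box in the subsequent corollary. So there is no ``paper's own proof'' to compare your proposal against.

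That said, your outline is the standard route to such a result and is essentially the strategy of \cite{BaGrWr}: reduce to indicator-like functions, invoke the classical Hadwiger theorem on sets to obtain the coefficients $c_i$, verify the formula on step functions by inclusion--exclusion, and pass to general tame $f$ via the (lower) continuity hypothesis. Your telescoping computation on nonnegative simple functions and the split $v(f)=v(f\vee 0)+v(f\wedge 0)$ for mixed-sign $f$ are the right moves. The two places where your sketch is thin are exactly the places where the original proof does real work: (i) the classical Hadwiger theorem requires \emph{continuity} (in the convex-geometric sense) of the set valuation $A\mapsto v(s\One_A)$, and you would need to show carefully that this follows from the functional continuity packaged in \cite[Definition~8]{BaGrWr}; and (ii) your argument for monotonicity of the $c_i$ is only a heuristic --- you correctly identify that monotonicity is precisely what makes $f\mapsto\int c_i\circ f\dmufloor{i}$ lower-continuous, but deducing monotonicity of each individual $c_i$ from lower-continuity of $v$ (which is a statement about the \emph{sum}) requires isolating the $i$th term, for instance by homogeneity and a scaling argument on test sets. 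These are genuine gaps to fill, not errors in approach.
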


We introduce the concept of a piecewise $C^2$ valuation, which imposes a smoothness condition on the functions $c_i$ in the previous theorem.

\begin{definition}
	A Euclidean-invariant, lower- or upper-continuous valuation $v$ is a \emph{piecewise $C^2$ valuation} if the functions $c_i$ guaranteed by Theorem \ref{thm:HadFun} are piecewise $C^2$ functions.
\end{definition}

Determining whether a valuation $v$ is piecewise $C^2$ is straightforward.
Let $A_0, A_1, \ldots, A_n$ be a sequence of subsets of $\real^n$ such that $\mu_i(A_j) = \delta_{ij}$, where $\delta_{ij}$ is the Kronecker delta.
Then the indicator function $h_r = r\One_{A_j}$ is a test function that isolates $c_j(r)$:
\begin{equation}
	v(h_r) = v(r\One_{A_j}) = \sum_{i=0}^n \int_{\real^n} c_i(r\One_{A_j}) \dmufloor{i} = \sum_{i=0}^n c_i(r) \mu_i(A_j) = c_j(r).
\end{equation}

For a piecewise $C^2$ valuation of a Gaussian-related random field, Theorem \ref{thm:ExpHadInt} gives the expected values of the Hadwiger integrals that appear in Hadwiger's theorem.
We obtain the following corollary.

\begin{corollary}
	Let $v$ be a lower-continuous piecewise $C^2$ valuation, and let $g = F \circ f : M \to \real$ be a Gaussian-related random field as in Theorem \ref{thm:ExpHadInt}, with the additional requirement that $f$ is isotropic.
	Then the expected value of $v(g)$ is
	\begin{equation}\label{eq:ExpVal}
		\expect(v(g)) = \sum_{i=0}^{n} \left( \mu_i(M)\expect(c_i(g)) + \sum_{j=1}^{n-i} \FLAG{i+j}{j} (2\pi)^{-j/2} \mu_{i+j}(M) \int_\real \gmf{j}\{ c_i(F) \ge s\} \, ds \right),
	\end{equation}
	and similarly for an upper-continuous piecewise $C^2$ valuation.
\end{corollary}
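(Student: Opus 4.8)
The plan is to feed Hadwiger's Theorem for Functions (Theorem~\ref{thm:HadFun}) into Theorem~\ref{thm:ExpHadInt}, term by term. First I would invoke Theorem~\ref{thm:HadFun}: since $v$ is Euclidean-invariant and lower-continuous, there are continuous, monotone functions $c_0,\dots,c_n\in C(\real)$ with $c_i(0)=0$ such that
\[
	v(g) = \sum_{i=0}^{n} \int_{M} c_i \circ g \dmufloor{i}.
\]
(Theorem~\ref{thm:HadFun} is stated for functions on $\real^n$; since $M$ is compact one applies it to functions supported in $M$, extended by zero, which affects none of the intrinsic-volume computations. Moreover $g = F\circ f$ is tame with probability one, because almost surely each $f_j$ is a stratified Morse function and $F$ is piecewise $C^2$.) Taking expectations and using linearity of $\expect$ over the finite sum,
\[
	\expect(v(g)) = \sum_{i=0}^{n} \expect\left( \int_{M} c_i \circ g \dmufloor{i} \right).
\]

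Next I would evaluate each summand $\expect\left(\int_M c_i\circ g \dmufloor{i}\right)$ using Theorem~\ref{thm:ExpHadInt}. The key observation is that $c_i\circ g = (c_i\circ F)\circ f$, where $c_i\circ F:\real^k\to\real$. Because $v$ is a piecewise $C^2$ valuation, each $c_i$ is piecewise $C^2$, and $F$ is piecewise $C^2$ by hypothesis; one checks that the composition $c_i\circ F$ is then again piecewise $C^2$, since the non-smooth locus of $c_i$ is finite and its preimage under the piecewise-$C^2$ map $F$ subdivides $\real^k$ into finitely many regions on which $c_i\circ F$ is $C^2$. Hence $c_i\circ F$ is an admissible choice for the function ``$F$'' in Theorem~\ref{thm:ExpHadInt}, applied with the fixed index $i$, and that theorem gives
\[
	\expect\left( \int_{M} c_i\circ g \dmufloor{i} \right) = \mu_i(M)\,\expect(c_i(g)) + \sum_{j=1}^{n-i} \FLAG{i+j}{j}(2\pi)^{-j/2}\mu_{i+j}(M)\int_{\real}\gmf{j}\{c_i(F)\ge s\}\,ds.
\]
Substituting this into the sum over $i=0,\dots,n$ yields \eqref{eq:ExpVal}. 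The upper-continuous case is identical, using the upper halves of Theorems~\ref{thm:HadFun} and~\ref{thm:ExpHadInt}.

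The main obstacle is reconciling the two geometric settings. In Hadwiger's Theorem for Functions the Hadwiger integrals---hence the intrinsic volumes---refer to the standard Euclidean structure on $\real^n$, whereas in the Gaussian kinematic formula, and so in Theorem~\ref{thm:ExpHadInt}, the intrinsic volumes are those of the Riemannian metric \eqref{eq:metric} induced by $f$. The two coincide precisely when the induced metric is the Euclidean one, which is exactly why the corollary adds the hypothesis that $f$ is isotropic: as noted after \eqref{eq:metric}, isotropy forces the induced metric to be a constant multiple of the Euclidean metric, so after the standard normalization the $\mu_i(M)$ appearing in the two theorems are literally the same valuation and the term-by-term substitution above is legitimate. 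A secondary point I would record is the interchange of $\expect$ with the defining integral of the Hadwiger integral; this is handled exactly as in the proof of Theorem~\ref{thm:ExpHadInt}, by splitting into $\int_0^\infty$ and $\int_{-\infty}^0$ and applying Tonelli's theorem to the nonnegative integrands $\mu_i\{g\ge s\}$ and $\mu_i(M)-\mu_i\{g\ge s\}$.
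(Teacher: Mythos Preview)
Your proposal is correct and follows essentially the same route as the paper: decompose $v(g)$ via Hadwiger's Theorem for Functions, then apply Theorem~\ref{thm:ExpHadInt} to each $c_i\circ g = (c_i\circ F)\circ f$, with isotropy used precisely to make the Euclidean intrinsic volumes in Theorem~\ref{thm:HadFun} coincide with the metric-dependent ones in Theorem~\ref{thm:ExpHadInt}. The paper's argument is terser, but your added remarks on the piecewise-$C^2$ nature of $c_i\circ F$ and on the role of isotropy merely make explicit what the paper leaves implicit.
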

\begin{proof}
	Since the field is isotropic, the $\mu_i$ are calculated with respect to the Euclidean metric.
	Thus, Euclidean-invariance allows us to apply Theorem \ref{thm:HadFun}, obtaining the decomposition
	\begin{equation}\label{eq:vg}
		v(g) = \sum_{i=0}^n \int_{\real^n} c_i \circ g \dmufloor{i}.
	\end{equation}
	Since each of the $c_i$ are piecewise $C^2$ functions, each composition $c_i(g)$ is a Gaussian-related random field satisfying the conditions of Theorem \ref{thm:ExpHadInt}.
	Therefore, we can apply Theorem \ref{thm:ExpHadInt} to each summand in equation \eqref{eq:vg}, obtaining equation \eqref{eq:ExpVal}.
\end{proof}

We conclude with a comment about critical values.
While the Euler integral has an elegant expression in terms of the critical values of a random field \cite{BG:pnas, BoBo}, 
a similar phenomena for the more general Hadwiger integrals is elusive.
Because Euler characteristic is a topological invariant, and the topology of superlevel sets of a function changes only at critical values, the Euler integral is determined precisely by the critical values.
However, the other intrinsic volumes are metric-dependent, returning geometric information about sets.
Thus, it appears that the Hadwiger integrals, other than the Euler integral, cannot be reduced to critical values alone.

\vspace{20pt}

\begin{center}
	\footnotesize{\textsc{Acknowledgements}}
\end{center}
\vspace{-3pt}
\footnotesize{
The work presented in this paper was partially carried out while the author was a postdoctoral fellow at the Institute for Mathematics and its Applications, 
during the annual program on Scientific and Engineering Applications of Algebraic Topology.
In particular, the author thanks Robert Adler and Jonathan Taylor for helpful conversations during the tutorial \em{Introduction to Statistics and Probability for Topologists}.
}


\bibliographystyle{amsalpha}

\end{document}